\pgfplotsset{compat=newest}
\newcommand{\vek}[1]{{\mathbf #1}}
\newcommand{\sv}[1]{\boldsymbol{#1}}
\newcommand{\play}[1]{^{(#1)}}
\newcommand{\Tplay}[1]{^{(#1)T}}
\newcommand{\eg}{e.g.~}
\newcommand{\normTwo}[1]{\left\lVert#1\right\rVert_2}
\newcommand{\APlay}{\mathcal{P}}
\newcommand{\optPlayVec}[2]{{\sv{#1}\play{#2}}^*}
\newtheorem{deff}{Definition}
\newtheorem{thm}{Theorem}
\newcommand{\ff}[1]{\textcolor{black}{#1}}
\begin{document}

	\begin{frontmatter}
	
	
	
	\title{On the Upper Bound of Near Potential Differential Games}

	\author{Balint Varga} 
	
	\address{Institute of Control Systems, Karlsruhe Institute of Technology, Kaiserstrasse 12, 76131 Karlsruhe, Germany, balint.varga2@kit.edu}

	\begin{abstract}
		This letter presents an \ff{extended} analysis and a \ff{novel upper bound }of the subclass of Linear Quadratic Near Potential Differential Games (LQ NPDG). LQ NPDGs are a subclass of potential differential games, for which a distance between an LQ exact potential differential game and the LQ NPDG. 
		\ff{LQ NPDGs exhibit a unique characteristic: the smaller the distance from an LQ exact potential differential game, the more closer their dynamic trajectories.}
		\ff{This letter introduces a novel upper bound for this distance.  Moreover, a linear relation between this distance and the resulting trajectory errors is established, opening the possibility for further application of LQ NPDGs.}
		
	\end{abstract}

	\begin{keyword}	Differential Games; Potential Games; Near Potential Differential Games; Upper Bound	\end{keyword}

	
\end{frontmatter}

\section{Introduction}
Game theory is a widely used mathematical tool to model interaction between multiple agents \cite{2018_HandbookDynamicGame_basar}. \ff{In a \textit{game}, different \textit{players} interact with each other in order to optimize their own \textit{cost function}. Due to the interaction between them, the optimal solution has to be computed in a coupled manner. One of the solution concepts is the so-called \textit{Nash Equilibrium} (NE), which emerges as a solution in non-cooperative games where players independently pursue their goals without forming agreements} \cite[Chapter~7-8]{2005_LQDynamicOptimization_engwerda}. This necessitates coupled optimization processes for each player in an $N$-player game. For a comprehensive overview of the theory of dynamic games, it is referred~to~\cite{2022_TheoryApplicationsDynamic_parilina}.

In \ff{the} case of the so-called \textit{potential games}, the game can be characterized by one single cost (potential) function instead of $N$, coupled optimizations. 
\ff{This enables the calculation of the Nash Equilibrium (NE) by simply optimizing this potential function. Furthermore, the uniqueness of the NE is assured when dealing with a convex potential function, enhancing the appeal of using this game characterization in practical scenarios, like motion planning }\cite{2020_MultiVehicleAutomatedDriving_fabiani}, communication network management \cite{2016_DynamicPotentialGames_zazo}, modeling human-robot interactions \cite{2021_OrdinalPotentialDifferential_varga}, \ff{multi agent systems }\cite{chen2023nash} or \ff{network-flow control problems} \cite{2024_StructureFeedbackPotential_prasad}. 

\ff{The core idea of near potential games is the usage of a distance metric between two differential games. In that way, the required exactness of the exact potential differential games is transformed into a less restrictive condition, which permits a small, remaining difference between the two games. The concept of near potential static games is introduced in~\cite{2010_ProjectionFrameworkNearpotential_candogan, 2013_NearPotentialGamesGeometry_candogan}. Based on the intuitive idea that if two games are \textit{close} in terms of the properties of the players' strategy sets, their properties in terms of NE should be somehow similar. A systematic framework for static games was developed in~\cite{2010_ProjectionFrameworkNearpotential_candogan}. It was shown that a near potential static game has a \textit{similar} convergence of the strategies\footnote{Note that the convergence of static games means the convergence of the decision-making process, which leads to one of the NEs of the game. The term \textit{dynamics} has no relation to the dynamics of the system states in the context of differential games.} compared to an exact potential static game. A similar convergence of the strategies means that similar changes in the input strategies lead to similar changes in the payoffs in the game. Furthermore, it is also shown that the meaning of close can be quantified in the developed framework, see \cite{2010_ProjectionFrameworkNearpotential_candogan}.} 

\ff{In this letter, a specific subclass, the \textit{Near Potential Differential Games} (NPDG) is discussed. In \cite{2023_LimitedInformationShared_varga}, the concept of the NPDGs was introduced, in which, the \textit{similarities} of the trajectories are given as a non-linear function of the \textit{closeness} of two games. In this letter, a novel upper bound is provided: A linear relation is derived facilitating a more feasible application of this upper bound. The primary contribution is the derivation of this novel upper bound for NPDGs.}

\section{Preliminaries}
In the following, the focus of this letter lies on the linear quadratic (LQ) differential games. LQ differential games are useful for modeling a wide range of engineering problems since they provide a simple and effective way to trade off conflicting objectives and make optimal decisions across dynamic systems.
\subsection{Exact Potential Differential Games}
\begin{deff}[LQ Differential Game \cite{2016_NonzeroSumDifferentialGames_basar}] \label{def:diff_game}
		An \textbf{LQ Differential Game} $\Gamma_\mathrm{d}$ is defined as a tuple of 
		\begin{itemize}
		\item a set of $N$ players $\ff{i \in \mathcal{P}=\{1,2,...,N\}}$, 
		\item a dynamic system with the system matrix $\vek{A}$ and the input matrix of player $i$, $\vek{B}\play{i}$
		\begin{equation} \label{eq:sys_dyn_lq}
			\dot{\sv{x}}(t) = \vek{A}\sv{x}(t) + \sum_{i \in \mathcal{P} } \vek{B}\play{i}\sv{u}\play{i}(t),
		\end{equation} 
		\item the joint set of control strategies of the players $\,\mathcal{U}  =  \mathcal{U}\play{1} \times ... \times \mathcal{U}\play{N}$ and 
		\item the set of the players' cost functions $\mathcal{J}=  \{J\play{1}, \, ... \,,J\play{N}$\}, where
		\begin{align} \label{eq:cost_fucntion} \nonumber
	J\play{i}&=\frac{1}{2}\int_{0}^{\tau_\mathrm{end}}  \sv{x}(t)^\mathsf{T}  \vek{Q}\play{i}\sv{x}(t) \\
		 &+   \sum_{j \in \APlay} {\sv{u}\play{j}(t)}^\mathsf{T} \vek{R}\play{ij}\sv{u}\play{j}(t)  \text{ d}t, \; i \in \APlay,
		\end{align}
	where $\vek{Q}\play{i}$ and $\vek{R}\play{ij}$ represent the penalty matrices for the system states and system inputs of the player $i$. The end of the game is $\tau_{\mathrm{end}}$. It is assumed that the matrices of the cost functions have a diagonal structure
	$\vek{Q}\play{i} = \mathrm{diag}\left(q\play{i}_1,q\play{i}_2,...,q\play{i}_n\right)$ and 
	$\vek{R}\play{ij} = \text{diag}\left(r\play{ij}_1,r\play{ij}_2,...,r\play{ij}_{p_i}\right),$ are positive semi-definite and positive definite, respectively.
	\end{itemize}
\end{deff}

\begin{deff}[Nash Equilibrium \cite{2016_NonzeroSumDifferentialGames_basar}]\label{def:NE}
		The game is in a \textit{Nash equilibrium} (NE) if the players cannot deviate from their actual strategies without increasing their costs
	$$
	J\play{i}\left( \optPlayVec{u}{i}, \optPlayVec{u}{\neg i}\right) \leq J\play{i}\left( \sv{u}\play{i}, \optPlayVec{u}{\neg i}\right) \; \; \forall i \in \mathcal{P}.
	$$
\end{deff}
In order to compute the NE of a differential game, the so-called coupled Riccati equations are set up \cite[Chapter~7]{2005_LQDynamicOptimization_engwerda}, for which the Hamiltonians of the players are computed such as
\begin{align} \label{eq:ham_SOA} \nonumber
	H\play{i} &= \frac{1}{2}\sv{x}(t)^\mathsf{T}  \vek{Q}\play{i}\sv{x}(t) \\
	&+\frac{1}{2}   \sum\limits_{j \in \mathcal{P}} {\sv{u}\play{j}(t)}^\mathsf{T} \vek{R}\play{ij}\sv{u}\play{j}(t)	+\sv{\lambda}\Tplay{i}(t)\dot{\sv{x}}(t).
\end{align}
For further details on the solution to the coupled Riccati equation, it is referred to \cite[Chapter 3]{2018_HandbookDynamicGame_basar}.

\begin{deff}[LQ Exact Potential Differential Games \ff{\cite{2016_SurveyStaticDynamic_gonzalez-sanchez}}] \label{def:EPG}
		Let an LQ differential game $\Gamma_\mathrm{epd}$ with system dynamics~\eqref{eq:sys_dyn_lq} be given. Furthermore, let the quadratic cost functions~\eqref{eq:cost_fucntion} and Hamiltonian functions \eqref{eq:ham_SOA} of the players be given. Assume that the aggregated inputs of the players and the aggregated input matrices are defined such that
		\begin{align*} 
			\sv{u}\play{p}(t) &= \left[{\sv{u}\play{1}}^\mathsf{T}(t), \, {\sv{u}\play{2}}^\mathsf{T}(t), \,... \, {\sv{u}\play{N}}^\mathsf{T}(t)\right]^\mathsf{T}, \\
			 \vek{B}\play{p} &= \left[\vek{B}\play{1}, \vek{B}\play{2}, ..., \vek{B}\play{N}\right],
		\end{align*}
		respectively. Furthermore, consider an LQ optimal control problem over an infinite time horizon $\tau_\mathrm{end} \rightarrow \infty$ with the cost function
		\begin{equation} \label{eq:pot_cost_function}
			J\play{p}=\frac{1}{2}\int_{0}^{\tau_\mathrm{end}} \sv{x}^\mathsf{T}(t) \vek{Q}\play{p}\sv{x}(t) + {\sv{u}\play{p}}^\mathsf{T}(t)\vek{R}\play{p}\sv{u}\play{p}(t) \mathrm{ d}t
		\end{equation} 
		as well as the Hamilton function 
		\begin{equation} \label{eq:potential_hamil_quad_def}
			H\play{p}(t) = \frac{1}{2}{\sv{x}(t)}^\mathsf{T} \vek{Q}\play{p}\sv{x}(t) +  \frac{1}{2}{\sv{u}\play{p}}^\mathsf{T}(t)\vek{R}\play{p}\sv{u}\play{p}(t) + \sv{\lambda}\Tplay{p}\dot{\sv{x}}(t),
		\end{equation}
		where the matrices $\vek{Q}\play{p}$ and $\vek{R}\play{p}$ are positive semi-definite and positive definite, respectively. If 
		\begin{equation} \label{eq:def_e_pot}
			\frac{\partial H\play{p}(t)}{\partial \sv{u}\play{i}(t)} = \frac{\partial H\play{i}(t)}{\partial \sv{u}\play{i}(t)}
		\end{equation} 
		holds for $\forall i \in \mathcal{P}$, the LQ differential game $\Gamma_\mathrm{epd}$ is an LQ \textbf{exact potential differential game}, which has the potential function~$J\play{p}$. 

\end{deff}

Definition \ref{def:EPG} reveals that the NE can be computed by the optimal control problem of \eqref{eq:sys_dyn_lq} and \eqref{eq:pot_cost_function} in the case of an exact potential differential game as long \eqref{eq:def_e_pot} holds. For further discussions and examples, the reader is referred to \cite{2016_PotentialGameTheory_la} and \cite{2018_PotentialDifferentialGames_fonseca-morales}.



\subsection{Distance between two \ff{Potential }Differential Games} 
Similar to the static case \cite{2013_NearPotentialGamesGeometry_candogan}, a distance measure between two differential games is introduced. 
\begin{deff}[Differential Distance \cite{2023_LimitedInformationShared_varga}]\label{def:Diff_Dist}
		Let an exact potential differential game $\Gamma_\mathrm{epd}$ with the potential function~$J\play{p}$ be given. Furthermore, let an arbitrary LQ differential game $\Gamma_\mathrm{npd}$ according to Definition \ref{def:diff_game} be given. The \textit{differential distance} (DD) between $\Gamma\play{p}_ \mathrm{epd}$ and $\Gamma_\mathrm{npd}$ is defined as
		\begin{equation} \label{eq:diff_distance}
			\sigma\play{i}_d(t):= \left\lVert\frac{\partial H\play{p}(t)}{\partial \sv{u}\play{i}(t)} - \frac{\partial H\play{i}(t)}{\partial \sv{u}\play{i}(t)}\right\rVert_2, \; i \in \mathcal{P}.
	\end{equation}
\end{deff} 
\textbf{Note \ref{def:Diff_Dist}:} Definition \ref{def:Diff_Dist} defines vector space, in which two games can be compared and their "closeness" can be quantified. 
It is the intuitive extension of Definition \ref{def:EPG} because for an exact potential differential game,
$$
\sigma\play{i}_d(t) = 0, \forall t \in [0, \tau_{\mathrm{end}}]
$$
holds, meaning that $\Gamma_\mathrm{npd}$ has the same characteristics as $\Gamma_{\text{ed}}$. Softening the condition $\sigma\play{i}_d(t) = 0$ enables a broader use. Using Definition \ref{def:Diff_Dist}, the subclass of NPDGs is formally defined.

\begin{deff}[Near Potential Differential Game \ff{\cite{2023_LimitedInformationShared_varga}}] \label{def:NPDG_distance}
		A differential game $\Gamma_\mathrm{npd}$ is said to be an NPDG if the DD between $\Gamma_\mathrm{npd}$ and an arbitrary exact potential differential game $\Gamma_\mathrm{epd}$ is 
		\begin{align} \label{eq:def:NPDG_distance}
			\underset{i}{\mathrm{max }} \; \sigma\play{i}_d(t)  \leq \Delta, \; \; i \in \mathcal{P},
		\end{align}
		\ff{where $\Delta \geq 0$ is a small constant, meaning that
		$$
		\underset{\Delta \rightarrow 0}{\mathrm{lim}} \underset{i}{\mathrm{max }} \;\sigma\play{i}_d(t) = 0
		$$
		 holds.} 

\end{deff}

\textbf{Note \ref{def:NPDG_distance}.1:} Definition \ref{def:NPDG_distance} does not exclude the subclass of exact potential differential games as $\Delta = 0$ is possible. Thus, exact potential differential games are a subset of NPDGs.

\textbf{Note \ref{def:NPDG_distance}.2:} The maximum DD is the measure of the likeness between the games. As the maximum DD increases, the dynamics of states and input trajectories of the NPDG are gradually getting larger. Thus, the main question is that for a given upper bound $\Delta$, how large the perturbation of the state and inputs dynamics between $\Gamma_\mathrm{npd}$ and $\Gamma_\mathrm{epd}$ \ff{is admissible}. Therefore, this perturbation is quantitatively characterized for LQ differential games in the following. 

\section{Upper Bound of NPDGs}
The main results of this letter \ff{are} presented in this section: The novel upper bound of the DD and a further analysis of the boundness of an NPDG. 

\subsection{Properties of an NPDG}
\begin{thm}
	[LQ NPDG]
	Let an LQ exact potential differential game $\Gamma\play{p}_{\text{ed}}$ with its state trajectories $\sv{x}\play{p}(t)$ in its NE be given.
	\ff{Furthermore, let an arbitrary LQ differential game $\Gamma_\mathrm{npd}$ according to Definition \ref{def:diff_game} with its state trajectories~$\sv{x}^*(t)$ in the NE of $\Gamma_\mathrm{npd}$ be given. It is also assumed that there is a $\Delta \sv{x}\play{p}(t)\geq 0 $ such
	\begin{equation} \label{eq:chap4_Dxp_NPDG1}
		\sv{x}\play{\mathrm{p}}(t) = \sv{x}^*(t) + \Delta \sv{x}\play{p}(t) \;  \mathrm{or} 
	\end{equation}
	\begin{equation} \label{eq:chap4_Dxp_NPDG2}
		\sv{x}\play{\mathrm{p}}(t) = \sv{x}^*(t) - \Delta \sv{x}\play{p}(t) \; \phantom{or}
	\end{equation}
	hold $\forall t \in [0,\tau_{\mathrm{end}}]$.} If
	\begin{align} \label{eq:chap4_Lemma_1_NPDG}
		\underset{i}{\mathrm{max }}\left\lVert{\vek{B}\play{i}}^\mathsf{T}\vek{P}\play{p} - {\vek{B}\play{i}}^\mathsf{T}\vek{P}\play{i} \right\rVert_2 < \ff{\Delta^*(\Delta)}
	\end{align}
	holds, where $\Delta$ is defined in \eqref{def:NPDG_distance}. 
	\ff{Furthermore, $\vek{P}\play{p}$ is the Riccati matrix obtained from the optimum of the potential function~\eqref{eq:pot_cost_function}.
	The matrix $\vek{P}\play{i}$ is the solution of the coupled Riccati equation~\eqref{eq:ham_SOA} for the player $i$, see \cite{2016_NonzeroSumDifferentialGames_basar}.
	Then $\Gamma_\mathrm{npd}$ is an LQ NPDG in accordance with Definition \ref{def:NPDG_distance}.}
	
\end{thm}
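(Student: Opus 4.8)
The plan is to reduce the maximal differential distance in~\eqref{eq:def:NPDG_distance} to the matrix quantity appearing in the hypothesis~\eqref{eq:chap4_Lemma_1_NPDG} and then to close the estimate with a uniform bound on the equilibrium state. First I would exploit that, at the NE of $\Gamma_\mathrm{npd}$, the stationarity of each player in its own input forces $\frac{\partial H\play{i}(t)}{\partial \sv{u}\play{i}(t)} = \sv{0}$. Hence, along the equilibrium trajectory $\sv{x}^*(t)$ of $\Gamma_\mathrm{npd}$, the differential distance~\eqref{eq:diff_distance} collapses to $\sigma\play{i}_d(t) = \normTwo{\frac{\partial H\play{p}(t)}{\partial \sv{u}\play{i}(t)}}$, i.e. only the gradient of the potential Hamiltonian survives.

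The next step is to evaluate this surviving gradient. From~\eqref{eq:potential_hamil_quad_def} one gets $\frac{\partial H\play{p}(t)}{\partial \sv{u}\play{i}(t)} = \vek{R}\play{p}_{ii}\sv{u}\play{i}(t) + {\vek{B}\play{i}}^\mathsf{T}\sv{\lambda}\play{p}(t)$, where $\vek{R}\play{p}_{ii}$ is the diagonal block of $\vek{R}\play{p}$ acting on $\sv{u}\play{i}$. Inserting the NE feedback law $\sv{u}\play{i}(t) = -\vek{R}\Invplay{ii}{\vek{B}\play{i}}^\mathsf{T}\vek{P}\play{i}\sv{x}^*(t)$ and the potential co-state relation $\sv{\lambda}\play{p}(t) = \vek{P}\play{p}\sv{x}^*(t)$, and using that the matched diagonal structure of the penalty matrices gives $\vek{R}\play{p}_{ii} = \vek{R}\play{ii}$, the input term reduces to $-{\vek{B}\play{i}}^\mathsf{T}\vek{P}\play{i}\sv{x}^*(t)$ and one obtains $\frac{\partial H\play{p}(t)}{\partial \sv{u}\play{i}(t)} = \left({\vek{B}\play{i}}^\mathsf{T}\vek{P}\play{p} - {\vek{B}\play{i}}^\mathsf{T}\vek{P}\play{i}\right)\sv{x}^*(t)$. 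Submultiplicativity of the spectral norm then gives $\sigma\play{i}_d(t) \le \normTwo{{\vek{B}\play{i}}^\mathsf{T}\vek{P}\play{p} - {\vek{B}\play{i}}^\mathsf{T}\vek{P}\play{i}}\,\normTwo{\sv{x}^*(t)}$, which is exactly the matrix term of~\eqref{eq:chap4_Lemma_1_NPDG} scaled by the state norm.

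Finally, I would absorb $\normTwo{\sv{x}^*(t)}$ into $\Delta^*(\Delta)$. Because both the potential and the coupled Riccati solutions are stabilizing infinite-horizon LQ designs, the closed-loop state decays and is uniformly bounded; the trajectory relations~\eqref{eq:chap4_Dxp_NPDG1}--\eqref{eq:chap4_Dxp_NPDG2} let me bound $\sv{x}^*(t)$ against the reference trajectory $\sv{x}\play{p}(t)$ and $\Delta\sv{x}\play{p}(t)$. Setting $\bar{x} := \sup_{t \in [0,\tau_{\mathrm{end}}]}\normTwo{\sv{x}^*(t)} < \infty$ and defining $\Delta^*(\Delta) := \Delta/\bar{x}$, the hypothesis $\max_i \normTwo{{\vek{B}\play{i}}^\mathsf{T}\vek{P}\play{p} - {\vek{B}\play{i}}^\mathsf{T}\vek{P}\play{i}} < \Delta^*(\Delta)$ propagates to $\max_i \sigma\play{i}_d(t) < \Delta$ for all $t$, which is precisely~\eqref{eq:def:NPDG_distance}; hence $\Gamma_\mathrm{npd}$ is an LQ NPDG.

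The step I expect to be the main obstacle is the collapse in the second paragraph: one must justify that the input-penalty blocks coincide so that the gradient difference reduces to the pure Riccati term $\left({\vek{B}\play{i}}^\mathsf{T}\vek{P}\play{p} - {\vek{B}\play{i}}^\mathsf{T}\vek{P}\play{i}\right)\sv{x}^*(t)$, and that the potential co-state may legitimately be evaluated along the \emph{same} trajectory $\sv{x}^*(t)$ as the coupled solution. A secondary difficulty is guaranteeing that $\bar{x}$ is finite and, ideally, expressible through a-priori data (initial state and stabilizing gains) rather than through the unknown equilibrium itself; this is where the assumed relation between $\sv{x}^*(t)$ and $\sv{x}\play{p}(t)$ in~\eqref{eq:chap4_Dxp_NPDG1}--\eqref{eq:chap4_Dxp_NPDG2} does the essential work.
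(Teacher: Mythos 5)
Your proposal is sound in outline and reaches the same final reduction as the paper, but by a genuinely different route. Both proofs share the same skeleton: bound $\sigma\play{i}_d(t)$ by $\normTwo{{\vek{B}\play{i}}^\mathsf{T}\vek{P}\play{p} - {\vek{B}\play{i}}^\mathsf{T}\vek{P}\play{i}}$ times a state-norm factor, then absorb that factor into the definition of $\Delta^*(\Delta)$. The difference is in how the Hamiltonian gradients are handled. You invoke exact stationarity at the NE of $\Gamma_\mathrm{npd}$ (so $\partial H\play{i}/\partial\sv{u}\play{i}\equiv 0$) and evaluate the potential gradient at the \emph{common} point $\sv{x}^*(t)$ with the costate convention $\sv{\lambda}\play{p}=\vek{P}\play{p}\sv{x}^*(t)$; this makes the hypotheses \eqref{eq:chap4_Dxp_NPDG1}--\eqref{eq:chap4_Dxp_NPDG2} a mere boundedness device and yields $\Delta^*=\Delta/\bar{x}$. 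The paper never sets the player gradient to zero: citing prior work, it writes \emph{both} gradients as small scalar perturbations around their respective optima, $\partial H\play{i}/\partial\sv{u}\play{i} = -\varepsilon\play{i}_c(\sv{x})\vek{B}\Tplay{i}\vek{P}\play{i}\sv{x}^*(t)$ and $\partial H\play{p}/\partial\sv{u}\play{i} = -\varepsilon\play{p}_c(\sv{x}){\vek{B}\play{i}}^\mathsf{T}\vek{P}\play{p}\sv{x}\play{p}(t)$, each living on its \emph{own} game's trajectory; the relations \eqref{eq:chap4_Dxp_NPDG1}--\eqref{eq:chap4_Dxp_NPDG2} then drive a two-case argument that swaps $\sv{x}\play{p}$ for $\sv{x}^*$, a cross term $\left|\varepsilon_c\right|\normTwo{\vek{B}\Tplay{i}\vek{P}\play{i}\Delta\sv{x}\play{p}(t)}$ is discarded as negligible, and the threshold becomes $\Delta^*=\Delta/(\left|\varepsilon_c\right| \cdot x_\mathrm{max})$. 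What your route buys is self-containedness and rigor: no perturbation functions imported from elsewhere, no case split, no ``$\approx 0$'' step. What it costs is exactly the point you flag yourself: the collapse to the pure Riccati difference requires the matched input-penalty identity between the potential block acting on $\sv{u}\play{i}$ and $\Gamma_\mathrm{npd}$'s own $\vek{R}\play{ii}$; that identity is automatic for the canonical potential construction attached to $\Gamma_\mathrm{epd}$'s players, but since $\Gamma_\mathrm{npd}$'s penalties may differ from $\Gamma_\mathrm{epd}$'s, in general an extra term $\left(\vek{R}\play{p}_i{\vek{R}\play{ii}}^{-1}-\vek{I}\right){\vek{B}\play{i}}^\mathsf{T}\vek{P}\play{i}\sv{x}^*(t)$ survives and must either be assumed away or folded into the bound. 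Note also that the two arguments silently interpret the differential distance of Definition \ref{def:Diff_Dist} differently (both gradients at one common state versus each along its own equilibrium trajectory), and that your threshold is more conservative than the paper's, which is inflated by the factor $1/\left|\varepsilon_c\right|$; since the theorem leaves $\Delta^*(\Delta)$ unspecified, both are consistent with the statement as given.
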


\begin{proof}
	The derivative of $H\play{i}$ is expressed as 
	\begin{equation} \label{eq:deriv_player_i}
		\frac{\partial H\play{i}(t)}{\partial \sv{u}\play{i}(t)} = \vek{R}\play{ii}\sv{u}\play{i}(t) + \vek{B}\Tplay{i}\sv{\lambda}\play{i}(t),
	\end{equation}
	which holds for $i \in \mathcal{P}$.
	Since the optimal control law of the players, \eqref{eq:deriv_player_i} is zero, a small perturbation around the optimal solution is sought. Based on \cite{2021_OrdinalPotentialDifferential_varga}, the derivatives of the Hamiltonian of player $i$ can be rewritten as
	\begin{equation} \label{eq:pr_2_res_2_npdg}
		\frac{\partial H\play{i}(t)}{\partial \sv{u}\play{i}(t)} = -\varepsilon\play{i}_c(\sv{x})\vek{B}\Tplay{i}\vek{P}\play{i}\sv{x}^*(t),
	\end{equation}
	and for the derivatives of the Hamiltonian of the potential function
	\begin{equation} \label{eq:diff_Hp}
		\frac{\partial H\play{p}(t)}{\partial \sv{u}\play{i}(t)} = -\varepsilon\play{p}_c(\sv{x}){\vek{B}\play{i}}^\mathsf{T}\vek{P}\play{p}\sv{x}\play{p}(t)
	\end{equation}
	are obtained, where 	
	$$\varepsilon\play{p}_c(\sv{x}) <\!\!<1 \; \mathrm{and} \; \varepsilon\play{i}_c(\sv{x})<\!\!<1$$ 
	are scalar perturbation functions. Substituting the derivatives into \eqref{eq:diff_distance}, the DD is stated as
	$$
	\sigma\play{i}_d(t)=  \left\lVert\varepsilon\play{p}_c(\sv{x}){\vek{B}\play{i}}^\mathsf{T}\vek{P}\play{p}\sv{x}\play{p}(t) - \varepsilon\play{i}_c(\sv{x})\vek{B}\Tplay{i}\vek{P}\play{i}\sv{x}^*(t) \right\rVert_2 .
	$$	
	Introducing an upper bound of the variation 
		$${\varepsilon_c:= \mathrm{max}\left(\varepsilon\play{p}_c(\sv{x}), \varepsilon\play{i}_c(\sv{x})\right)},$$ DD is rewritten as
	\begin{align}\label{eq:NPDG_proof_estim1} \nonumber
		\sigma\play{i}_d(t) &=  \left\lVert\varepsilon_c{\vek{B}\play{i}}^\mathsf{T}\vek{P}\play{p}\sv{x}\play{p}(t) - \varepsilon_c\vek{B}\Tplay{i}\vek{P}\play{i}\sv{x}^*(t) \right\rVert_2 \\
		&\leq \left| \varepsilon_c \right| \left\lVert{\vek{B}\play{i}}^\mathsf{T}\vek{P}\play{p}\sv{x}\play{p}(t) - \vek{B}\Tplay{i}\vek{P}\play{i}\sv{x}^*(t) \right\rVert_2.
	\end{align}
	On the one hand, if \eqref{eq:chap4_Dxp_NPDG1} holds, the upper bound of $\sigma\play{i}_d(t)$ is rewritten to
	\begin{align}\nonumber
		\sigma\play{i}_d&(t)= \\ \nonumber
		 = &\left| \varepsilon_c \right| \left\lVert{\vek{B}\play{i}}^\mathsf{T}\vek{P}\play{p}\sv{x}\play{p}(t) - \vek{B}\Tplay{i}\vek{P}\play{i}\sv{x}\play{p}(t) + \vek{B}\Tplay{i}\vek{P}\play{i}\Delta\sv{x}\play{p}(t)\right\rVert_2 \\  \nonumber
		\leq &\left| \varepsilon_c \right| \left\lVert \left( {\vek{B}\play{i}}^\mathsf{T}\vek{P}\play{p} - \vek{B}\Tplay{i}\vek{P}\play{i}\right)\sv{x}\play{p}(t) \right\rVert_2 \\ \nonumber
		+ &\underbrace{\left| \varepsilon_c \right| \left\lVert\vek{B}\Tplay{i}\vek{P}\play{i}\Delta\sv{x}\play{p}(t)\right\rVert_2}_{\approx 0 \; \mathrm{since} \; \varepsilon_c \cdot \Delta\sv{x}\play{p} <\!\!< 1 \, \mathrm{and} \, \varepsilon_c \cdot \Delta\sv{x}\play{p} \rightarrow 0} \\
		\approx &\left| \varepsilon_c \right| \left\lVert{\vek{B}\play{i}}^\mathsf{T}\vek{P}\play{p}- \vek{B}\Tplay{i}\vek{P}\play{i} \right\rVert_2 \left\lVert \sv{x}\play{p}(t)\right\rVert_2 \; i \in \mathcal{P}.
	\end{align}
	On the other hand, if \eqref{eq:chap4_Dxp_NPDG2} holds, the upper bound of $\sigma\play{i}_d(t)$ is
	\begin{equation} \label{eq:NPDG_proof_estim2}
		\sigma\play{i}_d(t) \leq \left\| \varepsilon_c \right| \left\lVert{\vek{B}\play{i}}^\mathsf{T}\vek{P}\play{i}- \vek{B}\Tplay{i}\vek{P}\play{p} \right\rVert_2 \left\lVert \sv{x}^*(t)\right\rVert_2 \; i \in \mathcal{P}.
	\end{equation}
	Introducing the notation for the maximum magnitude of the state vectors
	$$
	x_\mathrm{max} := \mathrm{max }\left(\left\lVert\sv{x}^*(t)\right\rVert_2, \left\lVert \sv{x}\play{p}(t)\right\rVert_2\right),
	$$ 
	the estimations \eqref{eq:NPDG_proof_estim1} and \eqref{eq:NPDG_proof_estim2} can be combined into
	$$
	\sigma\play{i}_d(t) \leq \left| \varepsilon_c \right| \left\lVert{\vek{B}\play{i}}^\mathsf{T}\vek{P}\play{p}- \vek{B}\Tplay{i}\vek{P}\play{i} \right\rVert_2 x_\mathrm{max} \; i \in \mathcal{P}.
	$$
	Introducing $\Delta^* = \frac{\Delta}{\left| \varepsilon_c \right| \cdot x_\mathrm{max}}$ leads to the upper bound of $\sigma\play{i}_d$,
	$$
	\underset{i}{\mathrm{max }}\left\lVert{\vek{B}\play{i}}^\mathsf{T}\vek{P}\play{p} - {\vek{B}\play{i}}^\mathsf{T}\vek{P}\play{i} \right\rVert_2 < \Delta^*
	$$
	proving that $\,\Gamma_{\mathrm{npd}}$ is an NPDG with an upper bound of $\Delta^*$.
	
\end{proof}	
If the upper bound of DD $\sv{\sigma}_d$ between the NPDG and the exact potential differential games is sufficiently \textit{small}, closed-loop characteristics with similar results can be drawn. In the case of differential games system state trajectories are analyzed\footnote{In the static case, the decision procedure to find the NE is the focus of the analysis. For a given distance between two static games, an approximate NE with an $\epsilon$ limit is obtained, which is called the $\epsilon$-NE of the game. For more information on the near potential static game and the concept of $\epsilon$-Nash Equilibrium, it is referred to \cite{2013_NearPotentialGamesGeometry_candogan}.}. The terms \textit{small} and \textit{similar} are described more precisely in the next subsection.

\subsection{Dynamics of LQ NPDGs} 
\ff{The analysis of the so-called (approximate) $\epsilon$-NE can be found in \cite{firoozi2020convex} or \cite{2023_OpenLoopFeedbackLQ_scarpa}. In this letter, the dynamics of the system trajectories are analyzed in order to provide a bound of the differences between two LQ differential games. In contrast to \cite{2023_LimitedInformationShared_varga}, this letter provides a new, linear relation between the DD and the trajectory error.}

Let it be assumed for the LQ differential game $\,\Gamma_\mathrm{npd}$ that the control laws of the players $i \in \mathcal{P}$ are obtained from the solution to the coupled Riccati equations over an infinite time horizon, which leads to the closed-loop system dynamics
\begin{align}\label{eq:closed_loop_linear_system} 
	&\dot{\sv{x}}(t) = \vek{A}^*_c\sv{x}(t), \; \; \sv{x}(t_0) = \sv{x}_0, \\ \nonumber
	&\mathrm{where} \; \vek{A}^*_c = \vek{A} - \sum_{i \in \mathcal{P}} \vek{B}\play{i}{\vek{R}\play{i}}^{-1} {\vek{B}\play{i}}^\mathsf{T} \vek{P}\play{i}
\end{align}
and that the unique solution to \eqref{eq:closed_loop_linear_system} is
\begin{equation} \label{eq:sol_NE}
	\sv{x}^*(t) = e^{\vek{A}^*_c \cdot t}\sv{x}_0.
\end{equation}
For the LQ exact potential differential games $\,\Gamma_\mathrm{epd}$, the control law $\vek{K}\play{p}= {\vek{R}\play{p}}^{-1} {\vek{B}\play{p}}^\mathsf{T} \vek{P}\play{p}$ is obtained from the optimization of the potential function (\ref{eq:pot_cost_function}), which is used to compute the feedback system dynamics
\begin{align}\label{eq:closed_loop_linear_system_pot}
	&\dot{\sv{x}}\play{p}(t) = \vek{A}\play{p}_c\sv{x}\play{p}(t), \; \; \sv{x}\play{p}(t_0) = \sv{x}\play{p}_0, \\ \nonumber
	&\mathrm{where} \; \vek{A}\play{p}_c = \vek{A} - \vek{B}\play{p}{\vek{R}\play{p}}^{-1} {\vek{B}\play{p}}^\mathsf{T} \vek{P}\play{p}.
\end{align}
The solution to (\ref{eq:closed_loop_linear_system_pot}) is
\begin{equation} \label{eq:sol_pot}
	\sv{x}\play{p}(t) = e^{\vek{A}\play{p}_c \cdot t}\sv{x}\play{p}_0.
\end{equation}
From the state trajectories $\sv{x}\play{p}(t)$ and $\sv{x}^*(t)$, an upper bound ($\eta$) of the errors is provided for a given $\Delta$ between two games. 
For this, a notion of the difference between two closed-loop system behaviors is introduced in Definition \ref{def:closed_loop_error}.

\begin{deff}[Closed-Loop System Matrix Error]\label{def:closed_loop_error}
	Consider an LQ exact potential differential game $\Gamma_\mathrm{epd}$ with the system trajectories \eqref{eq:sol_pot}. Furthermore, assume that an arbitrary LQ differential game $\Gamma_\mathrm{npd}$ is an NPDG with the system trajectories \eqref{eq:sol_NE}. Then, the \textbf{closed-loop system matrix error} between $\Gamma_\mathrm{epd}$ and $\Gamma_\mathrm{npd}$ is defined as
		\begin{equation}
			\Delta \vek{K} := \vek{A}^*_c - \vek{A}\play{p}_c.
		\end{equation}
	
\end{deff}

\textbf{Note \ref{def:closed_loop_error}:}
Two differential games are \textit{similar}, if the closed-loop system matrix error is small and consequently, 
the system trajectories of these two games $\sv{x}^*(t)$ and $\sv{x}\play{p}(t)$ are \textit{close} to each other. In this case, $\Gamma_\mathrm{npd}$ is an NPDG. This closeness between an NPDG and an LQ exact potential differential game is quantified in Theorem \ref{lem:npdg_boundedness}.

\begin{thm}[Boundedness of NPDGs] \label{lem:npdg_boundedness}
		Let an LQ NPDG $\,\Gamma_\mathrm{npd}$ and an exact potential differential game $\,\Gamma_\mathrm{epd}$ be given. Let the system state trajectories of the two games $\,\Gamma\play{p}_\mathrm{epd}$ and $\,\Gamma_\mathrm{npd}$ be  $\sv{x}\play{p}(t)$ and $\sv{x}^*(t)$, respectively. Moreover, 
		\begin{equation} \label{eq:proof_init_value}
			{\sv{x}\play{p}(t_0) = \sv{x}^*(t_0) = \sv{x}_0}
		\end{equation}	
		hold for the initial values. Then, the error between the system state trajectories of $\,\Gamma_\mathrm{npd}$ and $\,\Gamma_\mathrm{epd}$ are bounded over an arbitrary time interval $[t_0,t_1]$, such that
		\begin{equation} \label{eq:dif_traj_with_eps}
			\normTwo{\sv{x}\play{p}(t)-\sv{x}^*(t)} \leq \ff{C_\mathrm{NPDG}(t) \cdot \Delta}, \; \; \forall t \in [t_0,t_1],
		\end{equation}
		where $C_\mathrm{NPDG}(t)\geq 0$ is a positive, time-invariant coefficient.

\end{thm}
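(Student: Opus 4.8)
The plan is to bound the trajectory error by comparing the two closed-loop solutions $\sv{x}\play{p}(t) = e^{\vek{A}\play{p}_c \cdot t}\sv{x}_0$ and $\sv{x}^*(t) = e^{\vek{A}^*_c \cdot t}\sv{x}_0$, which share the same initial condition by the assumption~\eqref{eq:proof_init_value}. First I would write the error directly as
\begin{equation} \nonumber
	\sv{x}\play{p}(t) - \sv{x}^*(t) = \left( e^{\vek{A}\play{p}_c \cdot t} - e^{\vek{A}^*_c \cdot t}\right)\sv{x}_0,
\end{equation}
so that taking the Euclidean norm and using submultiplicativity yields $\normTwo{\sv{x}\play{p}(t)-\sv{x}^*(t)} \leq \normTwo{e^{\vek{A}\play{p}_c \cdot t} - e^{\vek{A}^*_c \cdot t}} \cdot \normTwo{\sv{x}_0}$. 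The task then reduces to bounding the difference of the two matrix exponentials in terms of the closed-loop system matrix error $\Delta\vek{K} = \vek{A}^*_c - \vek{A}\play{p}_c$ from Definition~\ref{def:closed_loop_error}.

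The key step is a standard perturbation estimate for matrix exponentials. I would use the integral identity
\begin{equation} \nonumber
	e^{\vek{A}\play{p}_c t} - e^{\vek{A}^*_c t} = \int_{0}^{t} e^{\vek{A}\play{p}_c (t-s)}\left(\vek{A}\play{p}_c - \vek{A}^*_c\right) e^{\vek{A}^*_c s}\, \mathrm{d}s,
\end{equation}
which follows from differentiating $e^{\vek{A}\play{p}_c (t-s)} e^{\vek{A}^*_c s}$ with respect to $s$ and integrating. Taking norms, bounding each exponential factor on the compact interval $[t_0,t_1]$ by a constant (using that $\normTwo{e^{\vek{M}s}} \leq e^{\normTwo{\vek{M}}s}$ and that both closed-loop matrices are stable so their exponentials are bounded), and recognizing $\normTwo{\vek{A}\play{p}_c - \vek{A}^*_c} = \normTwo{\Delta\vek{K}}$ gives a bound of the form $\normTwo{e^{\vek{A}\play{p}_c t} - e^{\vek{A}^*_c t}} \leq \tilde{C}(t)\,\normTwo{\Delta\vek{K}}$ for some nonnegative, time-dependent coefficient $\tilde{C}(t)$.

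It then remains to connect $\normTwo{\Delta\vek{K}}$ back to $\Delta$. Since $\Delta\vek{K}$ is built from the differences $\vek{B}\play{i}{\vek{R}\play{i}}^{-1}{\vek{B}\play{i}}^\mathsf{T}\vek{P}\play{i}$ against $\vek{B}\play{p}{\vek{R}\play{p}}^{-1}{\vek{B}\play{p}}^\mathsf{T}\vek{P}\play{p}$, I would invoke the first theorem, which bounds $\mathrm{max}_i \normTwo{{\vek{B}\play{i}}^\mathsf{T}\vek{P}\play{p} - {\vek{B}\play{i}}^\mathsf{T}\vek{P}\play{i}}$ by $\Delta^*(\Delta)$, so that $\normTwo{\Delta\vek{K}}$ is controlled by a constant multiple of $\Delta$. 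Absorbing all the constant factors, including $\normTwo{\sv{x}_0}$, the feedback-gain and penalty-matrix norms, and $\tilde{C}(t)$, into a single coefficient $C_\mathrm{NPDG}(t)\geq 0$ delivers the claimed bound~\eqref{eq:dif_traj_with_eps}.

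\textbf{Main obstacle.} I expect the hard part to be making the coefficient $C_\mathrm{NPDG}(t)$ genuinely \emph{time-invariant} as the statement asserts, since the naive exponential bound grows with $t$; one would need either to restrict to the stable regime where the exponentials are uniformly bounded for all $t$, or to reinterpret the constant as the supremum over the finite interval $[t_0,t_1]$. Equally delicate is cleanly relating the combined gain difference $\Delta\vek{K}$ to the per-player bound from the first theorem, because $\Delta\vek{K}$ mixes the aggregated potential gain with the sum of the individual players' gains, so the reduction requires that the input-weighting structure $\vek{R}\play{p}$ decompose consistently with the individual $\vek{R}\play{i}$.
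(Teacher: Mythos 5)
Your first half coincides with the paper's own proof: the paper likewise writes the error as $\bigl(e^{\vek{A}^*_c t}-e^{\vek{A}\play{p}_c t}\bigr)\sv{x}_0$ and obtains
\[
\normTwo{\sv{x}^*(t)-\sv{x}\play{p}(t)} \leq \normTwo{\Delta \vek{K}\cdot t}\, e^{\mathrm{max}\left(\normTwo{\vek{A}\play{p}_c\cdot t},\,\normTwo{\vek{A}^*_c\cdot t}\right)}\normTwo{\sv{x}_0},
\]
citing \cite[Theorem 11.16.7]{2009_MatrixMathematicsTheory_bernstein}; your Duhamel integral identity is exactly how that cited estimate is derived, so this part is sound and essentially identical. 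Your caveat about time-invariance is also fair: the paper's $C_\mathrm{NPDG}(t)$ does depend on $t$ and grows exponentially, and the paper itself only claims the bound on a finite interval, deferring $t\rightarrow\infty$ to a remark with a piecewise-redefined $\Delta$.

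The genuine gap is the step you compress into ``$\normTwo{\Delta\vek{K}}$ is controlled by a constant multiple of $\Delta$, by invoking the first theorem and absorbing constants.'' The first theorem bounds the \emph{unweighted} gain differences $\mathrm{max}_i\normTwo{\vek{B}\Tplay{i}\vek{P}\play{p}-\vek{B}\Tplay{i}\vek{P}\play{i}}$, whereas
\[
\Delta\vek{K} = \vek{B}\play{p}{\vek{R}\play{p}}^{-1}{\vek{B}\play{p}}^\mathsf{T}\vek{P}\play{p} - \sum_{i\in\mathcal{P}}\vek{B}\play{i}{\vek{R}\play{i}}^{-1}\vek{B}\Tplay{i}\vek{P}\play{i}
\]
carries the inverse weightings ${\vek{R}\play{p}}^{-1}$ and ${\vek{R}\play{i}}^{-1}$, which do not cancel against one another. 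Bridging that mismatch is where essentially the entire technical content of the paper's proof lies: it stacks the players' gains into $\vek{P}_{\sum\mathcal{P}}$, factors $\Delta\vek{K}=\vek{B}\play{p}{\vek{R}\play{p}}^{-1}\bigl({\vek{B}\play{p}}^\mathsf{T}\vek{P}\play{p}-\vek{R}\play{p}\vek{P}_{\sum\mathcal{P}}\bigr)$, block-decomposes $\vek{R}\play{p}$, applies a Frobenius-norm argument to extract the factor $N\cdot\mathrm{max}_i$, and, crucially, invokes the cost-function scaling ambiguity \emph{twice}: once choosing $\kappa^p$ so that $\normTwo{\vek{R}\play{p}}>1$ and the ${\vek{R}\play{p}}^{-1}$ factor can be discarded, and once choosing the $\kappa^i$ so that $\normTwo{\vek{B}\Tplay{i}\vek{P}\play{p}-\vek{R}\play{p}_i{\vek{R}\play{i}}^{-1}\vek{B}\Tplay{i}\vek{P}\play{i}}\leq\normTwo{\vek{B}\Tplay{i}\vek{P}\play{p}-\vek{B}\Tplay{i}\vek{P}\play{i}}$ holds. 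None of this is mere bookkeeping: without the freedom to rescale the cost functionals, the inequality relating the weighted and unweighted gain differences need not hold at all, and the constant you would ``absorb'' is not a constant but a structural discrepancy. You correctly flagged this as the delicate point in your final paragraph, but flagging it is not resolving it; as written, the proposal asserts the conclusion of the hardest part of the proof rather than supplying an argument for it.
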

\begin{proof}
		From the solution to the differential equations (\ref{eq:closed_loop_linear_system}) and (\ref{eq:closed_loop_linear_system_pot}), 
		$$
		\normTwo{\sv{x}^*(t)-\sv{x}\play{p}(t)}= \normTwo{e^{\vek{A}^*_c \cdot t}\sv{x}_0 - e^{\vek{A}\play{p}_c\cdot t}\sv{x}_0}
		$$ 
		is obtained. As (\ref{eq:proof_init_value}) holds, using Definition \ref{def:closed_loop_error} and \cite[Theorem 11.16.7]{2009_MatrixMathematicsTheory_bernstein} leads to 
		\begin{equation} \label{eq:chap4_NPDG_proof_upper_limit}
			\normTwo{\sv{x}^*(t)-\sv{x}\play{p}(t)} \leq \normTwo{\Delta \vek{K}\cdot t}  e^{ \mathrm{max}\, \left(\normTwo{\vek{A}\play{p}_c\cdot t}, \normTwo{\vek{A}^*_c \cdot t}\right) }  \normTwo{\sv{x}_0}.
		\end{equation}
		In the following, an upper bound of $\Delta \vek{K}$ is sought. Let the notation 
		\begin{equation} \label{eq:chap4_bound_npdg_P}
			\vek{P}_{\sum \mathcal{P}} = \begin{bmatrix}
				\vek{P}\play{1}_{\sum \mathcal{P}} \vspace*{1mm}\\
				\vspace*{1mm}
				\vek{P}\play{2}_{\sum \mathcal{P}} \\
				\vdots \\
				\vek{P}\play{i}_{\sum \mathcal{P}} \\
				\vdots \\
				\vek{P}\play{N}_{\sum \mathcal{P}}
			\end{bmatrix} = 
			\begin{bmatrix}
				{\vek{R}\play{1}}^{-1} {\vek{B}\play{1}}^\mathsf{T} \vek{P}\play{1} \\
				{\vek{R}\play{2}}^{-1} {\vek{B}\play{2}}^\mathsf{T} \vek{P}\play{2} \\
				\vdots \\
				{\vek{R}\play{i}}^{-1} {\vek{B}\play{i}}^\mathsf{T} \vek{P}\play{i} \\
				\vdots \\
				{\vek{R}\play{N}}^{-1} {\vek{B}\play{N}}^\mathsf{T} \vek{P}\play{N}
			\end{bmatrix}
		\end{equation}
		be introduced. Substituting (\ref{eq:sol_NE}), (\ref{eq:closed_loop_linear_system_pot}) and \eqref{eq:chap4_bound_npdg_P}  in \eqref{eq:chap4_NPDG_proof_upper_limit}, the upper bound 
		\begin{align} \label{eq:chap4_NPDG_proof} \nonumber
			\normTwo{\Delta \vek{K}} =& 		\normTwo{ \vek{B}\play{p} {\vek{R}\play{p}}^{-1} {\vek{B}\play{p}}^\mathsf{T} \vek{P}\play{p} - \vek{B}\play{p} \sum_{i \in \mathcal{P}} {\vek{R}\play{i}}^{-1} {\vek{B}\play{i}}^\mathsf{T} \vek{P}\play{i}  }\\ \nonumber
			=&\normTwo{\vek{B}\play{p}\left( {\vek{R}\play{p}}^{-1} {\vek{B}\play{p}}^\mathsf{T} \vek{P}\play{p} - \vek{P}_{\sum \mathcal{P}}  \right)} \\
			=&\normTwo{\vek{B}\play{p} {\vek{R}\play{p}}^{-1} \left({{\vek{B}\play{p}}^\mathsf{T} \vek{P}\play{p} - \vek{R}\play{p}}\vek{P}_{\sum \mathcal{P}}\right)} 		
		\end{align} 
		is obtained. In addition, let the matrix 
		\begin{equation} \label{eq:chap4_bound_npdg_R}
			\vek{R}\play{p} = \begin{bmatrix}
				\vek{R}\play{p}_1, \; 
				\vek{R}\play{p}_2 , \;
				\cdots , \;
				\vek{R}\play{p}_i , \;
				\cdots , \;
				\vek{R}\play{p}_N
			\end{bmatrix}^\mathsf{T}
		\end{equation}
		be defined where $\vek{R}\play{p}_i$ is the submatrix for the inputs $\sv{u}\play{i}$ of player $i$, for which 
		\begin{equation*}
			{\vek{R}\play{p}} \vek{P}_{\sum \mathcal{P}} = \sum_{i \in \mathcal{P}} \vek{R}\play{p}_i \vek{P}\play{i}_{\sum \mathcal{P}}
		\end{equation*}
		hold. Thus \eqref{eq:chap4_NPDG_proof} can be reformulated to 
		\begin{align} \nonumber
			\normTwo{\Delta \vek{K}} =&\normTwo{\vek{B}\play{p} {\vek{R}\play{p}}^{-1} \left({\vek{B}\play{p}}^\mathsf{T} \vek{P}\play{p} -  \sum_{i \in \mathcal{P}} \vek{R}\play{p}_i \vek{P}\play{i}_{\sum \mathcal{P}} \right) } \\
			\leq& \normTwo{\vek{B}\play{p}}\normTwo{{\vek{R}\play{p}}^{-1}}\normTwo{{\vek{B}\play{p}}^\mathsf{T} \vek{P}\play{p} - \sum_{i \in \mathcal{P}} \vek{R}\play{p}_i \vek{P}\play{i}_{\sum \mathcal{P}}}.
		\end{align}
		
		Due to the well-known scaling ambiguity, there is a manifold of the potential functions (\ref{eq:pot_cost_function}) that result in an identical feedback gain matrix, thus a scaling factor $\kappa^{p}>0 \in \, \mathbb{R}$ can be chosen such that  ${\tilde{J}\play{p}} = \kappa^{p} \cdot J\play{p}$ and $
			\normTwo{\vek{R}\play{p}} > 1
		$ 
		holds.
		Assuming a suitable scaling, \eqref{eq:chap4_NPDG_proof} leads to
		$$
		\normTwo{\Delta \vek{K}} \leq   \normTwo{\vek{B}\play{p}} \normTwo{  {\vek{B}\play{p}}^\mathsf{T} \vek{P}\play{p} - \sum_{i \in \mathcal{P}} \vek{R}\play{p}_i \vek{P}\play{i}_{\sum \mathcal{P}}  }.
		$$ 
		Then, let the following matrix be introduced 	
		\begin{equation} \label{eq:Froben_norm}
			\tilde{\vek{F}} = \begin{bmatrix}
				{\vek{B}\play{1}}^\mathsf{T} \vek{P}\play{p} -  \vek{R}\play{p}_1 \vek{P}\play{1}_{\sum \mathcal{P}} \\
				\vdots \\
				{\vek{B}\play{i}}^\mathsf{T} \vek{P}\play{p} - \vek{R}\play{p}_i \vek{P}\play{i}_{\sum \mathcal{P}} \\
				\vdots \\
				{\vek{B}\play{N}}^\mathsf{T} \vek{P}\play{p} - \vek{R}\play{p}_N \vek{P}\play{N}_{\sum \mathcal{P}}
			\end{bmatrix} = {\vek{B}\play{p}}^\mathsf{T} \vek{P}\play{p} - \sum_{i \in \mathcal{P}} \vek{R}\play{p}_i \vek{P}\play{i}_{\sum \mathcal{P}}.
		\end{equation}
		The so-called Frobenius norm is defined as the entry-wise Euclidean norm of a matrix (see \cite{2021_NormsPrincipalSubmatrices_bunger}), for which
		\begin{equation} \label{eq:Frobnorm2}
			\normTwo{\tilde{\vek{F}}} \leq \left| \left|  \tilde{\vek{F}}\right|\right|_F
		\end{equation}
		holds (see \cite[Chapter 5]{2017_MatrixAnalysis_horn} or \cite[Section 9.8.12]{2009_MatrixMathematicsTheory_bernstein}). Applying the definition of the Frobenius norm to \eqref{eq:Froben_norm},
		\begin{equation} \label{eq:Frobnorm3}
			\left| \left|  \tilde{\vek{F}}\right|\right|_F = N \cdot \underset{i}{\mathrm{max }} \normTwo{{\vek{B}\play{i}}^\mathsf{T} \vek{P}\play{p} - \vek{R}\play{p}_i \vek{P}\play{i}_{\sum \mathcal{P}}}, \, i \in \mathcal{P}
		\end{equation}
		is obtained. Using property \eqref{eq:Frobnorm2} and \eqref{eq:Frobnorm3} leads to an upper bound 
		\begin{align} 
			\normTwo{\Delta \vek{K}}  &\leq   \normTwo{\vek{B}\play{i}} \normTwo{ {\vek{B}\play{p}}^\mathsf{T} \vek{P}\play{p}  -  \sum_{i \in \mathcal{P}} \vek{R}\play{p}_i \vek{P}\play{i}_{\sum \mathcal{P}} } \\
			&\leq\normTwo{\vek{B}\play{p}}   N \cdot \underset{i}{\mathrm{max}} \, \normTwo{{\vek{B}\play{i}}^\mathsf{T} \vek{P}\play{p} \! - \vek{R}\play{p}_i \vek{P}\play{i}_{\sum \mathcal{P}} }.
		\end{align}
		Due to the scaling ambiguity, ${{\tilde{J}\play{i}} = \kappa^{i} \cdot  J\play{i}}, \, \kappa^{i}>0 \in \, \mathbb{R} $ holds and $\kappa^{i}$ and $\kappa^{p}$ can be modified to obtain $\vek{R}\play{i}$ and $\vek{R}\play{p}$, such that
		\begin{equation*} 
			\normTwo{{\vek{B}\play{i}}^\mathsf{T} \vek{P}\play{p} \! - \! \vek{R}\play{p}_i {\vek{R}\play{i}}^{-1} {\vek{B}\play{i}}^\mathsf{T} \vek{P}\play{i} } \leq \normTwo{{{\vek{B}}^{(i)}}^\mathsf{T} \vek{P}\play{p} -  {\vek{B}\play{i}}^\mathsf{T} \vek{P}\play{i}}
		\end{equation*}	
		holds, for which 
		\begin{equation} \label{eq:chap4_NPDG_extra_codition}
			\normTwo{\vek{R}\play{p} {\vek{R}\play{i}}^{-1} {\vek{B}\play{i}}^\mathsf{T} \vek{P}\play{i} } \geq \normTwo{ {\vek{B}\play{i}}^\mathsf{T} \vek{P}\play{i}}
		\end{equation}
		is sufficient (see \cite[Section 9.9.42]{2009_MatrixMathematicsTheory_bernstein}). This leads to
		\begin{align} \label{eq:chap4_NPDG_lemma_preturb_DK_fin}
			\normTwo{\Delta \vek{K}} &\leq  \normTwo{\vek{B}\play{p}}  N \cdot \underset{i}{\mathrm{max }} \normTwo{{{\vek{B}}^{(i)}}^\mathsf{T} \vek{P}\play{p} -  {\vek{B}\play{i}}^\mathsf{T} \vek{P}\play{i}} \\ \nonumber
			&= \normTwo{\vek{B}\play{p}} N \cdot \Delta.
		\end{align}
		The substitution of the upper bound of $\Delta \vek{K}$ in \eqref{eq:chap4_NPDG_proof_upper_limit} by \eqref{eq:chap4_NPDG_lemma_preturb_DK_fin} leads to the coefficient 
		\begin{equation} \label{eq:chap4_NPDG_x_upper_estim}
			\ff{C_\mathrm{NPDG}(t) = \frac{\normTwo{\sv{x}_0}}{\left| \varepsilon_c \right| \cdot x_\mathrm{max}} \cdot \normTwo{\vek{B}\play{p}} N \cdot t \cdot e^{ \mathrm{max}\, \left(\normTwo{\vek{A}\play{p}_c\cdot t}, \normTwo{\vek{A}^*_c \cdot t}\right) }}, 
		\end{equation}
		which results in the following upper bound of the trajectory error
		\begin{align} \label{eq:chap4_ndpg_lq_fin}
	\ff{		\normTwo{\sv{x}\play{p}(t)-\sv{x}^*(t)} \leq C_\mathrm{NPDG}(t) \cdot\Delta.}
		\end{align}

\end{proof}		
	
\textbf{Remark 1:}\\
From \eqref{eq:chap4_ndpg_lq_fin}, it can be seen that the upper bound of the DD governs the maximal admissible error between the trajectories, where the function $C_\mathrm{NPDG}(t)$ depends only on the initial value, the system structure and the time interval $[t_0,t_1]$.

\textbf{Remark 2:}\\
In \eqref{eq:chap4_NPDG_x_upper_estim}, $C_{\mathrm{NPDG}}(t)$ is bounded in the time interval $[t_0,t_1]$. Thus, \ff{Theorem} \ref{lem:npdg_boundedness} holds ${\forall t \in [t_0,t_1]}$ only. However, $\Delta$ can be defined as
$$
\Delta := \begin{cases}
	\Delta_1 & \forall t \in [t_0,t_1] \\
	\Delta_2 & \forall t \in [t_1,t_2] \\
	\vdots \\
	\Delta_N & \forall t \in [t_{N-1},t_N] \\
	\vdots
\end{cases}
$$
In case of asymptotically stable system state trajectories $\sv{x}\play{p}(t)$ and $\sv{x}^*(t)$, a monotonic decreasing series, $\Delta_{N-1} \geq \Delta_{N},$ can be assumed to prevent $C_{\mathrm{NPDG}}(t)$ from an exponential growth for $t \rightarrow \infty$. Consequently, Theorem \ref{lem:npdg_boundedness} also holds for~$t \rightarrow \infty$.

\textbf{Remark 3:}\\
Note that \ff{Theorem} \ref{lem:npdg_boundedness} differs from the upper bound of the distance between solutions of two general initial value problems of differential equations: The upper bound between two general initial value problems is given as a function of the Lipschitz constant and is usually proved with the Gr\"onwall-Bellman inequality, see \eg \cite[Theorem 3.4.]{2002_NonlinearSystems_khalil}. On the other hand, \ff{Theorem} \ref{lem:npdg_boundedness} provides the link between the upper bound \ff{$\normTwo{\sv{x}\play{p}(t)-\sv{x}^*(t)}$ }and the DD of the two games~$\Delta$, which differs from general initial value problems. Thus, \ff{Theorem} \ref{lem:npdg_boundedness} is a special case of Theorem 3.4.~in~\cite{2002_NonlinearSystems_khalil}. 



\section{Discussion}
\ff{The main result of this letter enables a broader understanding of the concepts of NPDGs, which provide a more compact representation of strategic games. This makes them suitable for engineering applications, as the strictness of exact potential differential games is softened, thereby extending the applicability of the concept of potential games.}

Illustrative engineering examples include human-human or robot-human interactions, for which NPDGs are suitable models.~Such interactions are modeled by differential games in literature \cite{2019_DifferentialGameTheory_li, 2021_TheoreticalExperimentalInvestigation_na} and studies have demonstrated that the resulting motions of human-human or robot-human interactions can be characterized by the NE of this differential game \cite{2009_NashEquilibriaMultiAgent_braun}. 
Nevertheless, the assumption of NE can be violated due to the so-called \textit{bounded rationality} of humans in some cases (cf. \cite{2019_ExperimentsSensorimotorGames_chasnov, 10163400}). In cases where these violations of the NE in human-machine interaction scenarios, the proposed upper bound of the DD is a helpful tool to quantify the deviation from the NE. Thus, the concept can be used to analyze and design human-machine interactions.

\addtolength{\textheight}{-9.0cm}

\section{Summary and Outlook} \label{sec:sum_and_out}

\ff{This letter introduces a novel upper bound between an NPDG and an exact potential differential game. Moreover, this letter shows that the resulting trajectory error has a linear relation to the defined upper bound, which enables the prediction of the maximal trajectory error between an NPDG and an exact potential differential game. In the future, the proposed NPDG will be applied to model human-machine interactions.}







\end{document}